\documentclass[12pt]{amsart}



\usepackage{amssymb}
\usepackage{amsmath}
\usepackage{dsfont}
\usepackage{url}
\usepackage{enumerate}

\usepackage{graphicx}

\makeatletter
\@namedef{subjclassname@2010}{%
  \textup{2010} Mathematics Subject Classification}
\makeatother



\newtheorem{thm}{Theorem}[section]

\newtheorem{lem}[thm]{Lemma}
\newtheorem{pro}[thm]{Proposition}



\theoremstyle{definition}



\numberwithin{equation}{section}


\frenchspacing

\textwidth=13.5cm
\textheight=23cm
\parindent=16pt
\oddsidemargin=-0.5cm
\evensidemargin=-0.5cm
\topmargin=-0.5cm



\newcommand{\sgn}{\textrm{sgn}}

\newcommand{\Z}{\mathbb{Z}}

\newcommand{\N}{\Z_{>0}}
\newcommand{\bigoh}{{\mathcal{O}}}
\newcommand{\e}{{\rm e}}


\begin{document}


\baselineskip=17pt



\title{Explicit Upper Bounds for $\left|L(1, \chi)\right|$ when $\chi(3)=0$}

\author[S. SAAD EDDIN]{Sumaia SAAD EDDIN}
\address{Laboratoire Paul Painlev\'e,
Universit\'e des Sciences et Technologies de Lille,
B\^atiment M2,
cit\'e Scientifique,
59655 Villeneuve d'ascq Cedex, France.}
\email{sumaia.saad-eddin@math.univ-lille1.fr}

\author{David J. Platt}
\address{Heilbronn Institute for Mathematical Research, University of Bristol, University Walk, Bristol, BS8 1TW, United Kingdom}
\email{Dave.Platt@bristol.ac.uk}

\date{24 May 2013}

\begin{abstract}
Let $\chi$ be a primitive Dirichlet character of conductor $q$ and let us denote by $L(z, \chi)$ the associated L-series.
In this paper we provide an explicit upper bound for $\left|L(1, \chi)\right|$ when $3$ divides $q$.
\end{abstract}

\subjclass[2010]{Primary 11M06; Secondary 11Y35}

\keywords{Dirichlet characters, Dirichlet $L$-function, Gauss sums.}

\maketitle

\section{Introduction and results}
Let $\chi$ be a primitive Dirichlet character of conductor $q$ and let us denote by $L(z, \chi)$ the associated L-series. Recall we describe $\chi$ as even when $\chi(-1)=1$ and odd when $\chi(-1)=-1$. The upper bound for $\left|L(1, \chi)\right|$ has received considerable attention, especially in the past $25$ years. In \cite{Louboutin2002} and \cite{Louboutin2004}, Louboutin by used integral representations of Dirichlet $L$ functions to obtain the following upper bound for $\left|L(1, \chi)\right|$ when $3$ divides $q$; 
\begin{equation*}
|L(1,\chi)|\leq \tfrac 13\log q + \left\{ \begin{array}{ll}
        0.3816
        & \textrm{for even $\chi$,}\vspace{0.5cm}\\
        0.8436
        & \textrm{for odd $\chi$.}
\end{array} \right.
\end{equation*}
In 2001, Ramar\'e \cite{Ramare2001} described a different method (see Proposition~\ref{pro1}). In this paper, we exploit Ramar\'e's method to improve on Louboutin's bound. Specifically, we show;
\begin{thm}
\label{Thm4}
Let $\chi $ be a primitive character of conductor $q$ such that $3|q$. Then 
\begin{equation*}
|L(1,\chi)|\leq \tfrac 13\log q + \left\{ \begin{array}{ll}
        0.368296
        & \textrm{for even $\chi$,}\vspace{0.5cm}\\
        0.838374
        & \textrm{for odd $\chi$.}
\end{array} \right.
\end{equation*}
\end{thm}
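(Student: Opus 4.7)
The approach is to invoke Ramaré's identity (Proposition~\ref{pro1}), which expresses $L(1,\chi)$ as a truncated character sum $\sum_{n\le T}\chi(n)/n$ together with an auxiliary (tail) term involving the Gauss sum $\tau(\chi)$ and a smoothing kernel that differs depending on whether $\chi$ is even or odd. Bounding the character values trivially by $|\chi(n)|\le 1$ turns the partial sum into a weighted variant of $\sum_{n\le T}1/n$, while the $T$-dependence in the tail term is balanced against it by choosing $T$ optimally in terms of $q$. Applied unconditionally this produces a leading term of the form $\tfrac 12\log q$; our task is to improve the numerical constants by exploiting the hypothesis $3\mid q$.

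The key ingredient is that $3\mid q$ together with the primitivity of $\chi$ forces $\chi(3)=0$, so $\chi(n)=0$ whenever $3\mid n$. Accordingly the truncated character sum may be restricted to integers coprime to $3$, which replaces $\sum_{n\le T}1/n$ by
\begin{equation*}
\sum_{\substack{n\le T \\ (n,3)=1}}\tfrac{1}{n}
\;=\;\tfrac{2}{3}\log T + \tfrac{1}{3}\log 3 + \tfrac{2}{3}\gamma + O(1/T).
\end{equation*}
This is what delivers the coefficient $\tfrac 13\log q$ in the final bound (two-thirds of the one-half that the unrestricted Ramaré method would give), while the extra constants $\tfrac{1}{3}\log 3$ and $\tfrac 23\gamma$ are absorbed into the final explicit constant.

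I would then treat the even and odd cases separately, since the kernel in Ramaré's identity depends on the parity of $\chi$, and re-optimize the truncation $T$ in each case. Inserting the sieved partial sum together with the tail estimate yields an explicit function of $T$ to minimize; setting its derivative to zero produces a transcendental equation whose numerical solution selects the optimal cutoff. Substituting back yields the stated constants $0.368296$ (even) and $0.838374$ (odd).

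The main obstacle is the explicit book-keeping. The improvement over Louboutin's constants sits in the third decimal place, so the $O(1/T)$ remainder in the sieved partial sum, the contribution of the Gauss-sum tail, and the numerical minimization must all be tracked with genuine precision rather than asymptotically; in particular, one must avoid any $O(1)$ losses at the boundary $n\approx T$ where the smoothing kernel transitions, since such losses would wipe out the gain. Otherwise the argument is a careful re-run of Ramaré's method with the extra sieving imposed by $\chi(3)=0$.
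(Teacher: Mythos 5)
Your central idea --- that $3\mid q$ forces $\chi(3)=0$, so the $n$-sum in Ramar\'e's identity can be sieved to $(n,3)=1$, cutting the coefficient of $\log q$ from $\tfrac12$ to $\tfrac13$ --- is exactly the paper's starting point, and your asymptotic for the sieved harmonic sum is the right heuristic. But there are two genuine gaps. First, the sieving must be applied to the Gauss-sum term as well: $\overline{\chi}(m)=0$ for $3\mid m$, so the sum over $1\le m\le \delta q$ also runs over $(m,3)=1$ and contributes roughly $\tfrac23\delta q$ rather than $\delta q$. If you sieve only the $n$-sum and leave the tail term alone, re-optimizing $\delta$ in the even case lands you at a constant of about $\tfrac13\log 3+\tfrac23\log\tfrac32\approx 0.64$, which is worse than Louboutin. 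Moreover, controlling the sieved $m$-sum to within $O(1/\sqrt q)$ requires a dedicated Euler--Maclaurin-type comparison for convex functions summed over the progressions $m\equiv 1,2\pmod 3$ (the paper's Lemma~\ref{lem41}); this is the technical heart of the argument and your proposal does not anticipate it.

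Second, and more fundamentally, the optimization you describe cannot produce the stated constants for all $q$. The analytic argument yields $|L(1,\chi)|\le\tfrac13\log q+\tfrac13\log 3+E(q)$ with $E(q)$ positive and of size $O(1/\sqrt q)$ in the even case; since $\tfrac13\log 3=0.36620\ldots$, the claimed constant $0.368296$ leaves room for only $E(q)\le 0.0021$, which forces $q$ to be at least about $2\cdot 10^6$ (indeed the theorem's constants are exactly the values $C_{\textrm{even}}(2\cdot 10^6)$ and $C_{\textrm{odd}}(2\cdot 10^6)$ from the paper's error terms). For $3\le q\le 2\cdot 10^6$ the paper resorts to a rigorous interval-arithmetic computation of $L(1,\chi)$ for all of the roughly $1.15\cdot 10^{11}$ primitive characters in that range, using the Hurwitz-zeta/digamma representation and FFTs. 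Without some substitute for that numerical verification, your argument establishes the theorem only for sufficiently large conductors and cannot close for small $q$.
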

In section~\ref{sec3} we establish this result for conductor $q~>~2\cdot~10^6$. In section~\ref{sec4} we describe a rigorous computation to confirm that the result also holds for conductor $q\in [3,2\cdot 10^6]$.
\section{Lemmas from elsewhere}
\label{sec1}
The proof of the following results can be found in \cite{Ramare2001}. We use $\tau(\chi)$ to denote the Gauss sum
\begin{equation*}
\tau(\chi):=\sum\limits_{a\;\textrm{mod}\;q} \chi(a) e\left(a/q\right)
\end{equation*}
where, as usual
\begin{equation*}
\e(x):=\exp(2\pi i x).
\end{equation*}
The modulus of the Gauss sum is well known when $\chi$ is primitive and given by $|\tau(\chi)|=\sqrt{q}$.

\begin{pro}
\label{pro1}
Set
\begin{eqnarray*}
 F_3(t)&=&\left(\frac{\sin (\pi t)}{\pi}\right)^2\left(\frac{2}{t}+\sum\limits_{m\in \mathbb{Z}}\frac{\sgn (m)}{(t-m)^2}\right),
\\&&
j(t)=2\int\limits_{|t|}^{1}\left(\pi (1-u)\cot (\pi u)+1\right)\, du,
\\&&
F_4(t)=1-\left(\frac{\sin (\pi t)}{\pi t}\right)^2.
\end{eqnarray*}
Let $\chi$ be a primitive Dirichlet character of conductor $q$. Then, we have: 
\begin{equation*}
L(1, \chi)=
\begin{cases}
    \displaystyle{\sum\limits_{n\geq 1} \frac{\left(1-F_3(\delta n)\right)\chi(n)}{n}-\frac{\tau(\chi)}{q}\sum\limits_{1\leq m\leq \delta q}\overline{\chi }(m)j\left(\frac{m}{\delta q}\right) } &\textrm{for even $\chi$,}\\
  
  \displaystyle{\sum\limits_{n\geq 1} \frac{\left(1-F_4(\delta n)\right)\chi(n)}{n}+\frac{i\pi\tau(\chi)}{q}\sum\limits_{1\leq m\leq \delta q}\overline{\chi }(m)\left(1-\frac{m}{\delta q}\right)^2 } &\textrm{for odd $\chi$.}
\end{cases}
\end{equation*}
\end{pro}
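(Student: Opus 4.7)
The plan is to follow Ramar\'e's approach: start from $L(1,\chi)=\sum_{n\geq 1}\chi(n)/n$, partition it using the auxiliary function $F$ (equal to $F_3$ in the even case and $F_4$ in the odd case), and convert the resulting ``cut-off'' piece into the displayed finite Gauss-sum expression via Fourier inversion for primitive Dirichlet characters. Specifically, I would write
$$ L(1,\chi)=\sum_{n\geq 1}\frac{(1-F(\delta n))\chi(n)}{n}+\sum_{n\geq 1}\frac{F(\delta n)\chi(n)}{n}; $$
the first sum is already in the shape appearing in the statement, so the task is to put the second sum into closed form.

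For that second sum, I would insert the expansion $\chi(n)=\tfrac{\tau(\chi)}{\chi(-1)q}\sum_{a}\overline\chi(a)\,\e(an/q)$ (valid for primitive $\chi$), interchange the two summations, and split $\e(an/q)$ into cosine and sine components. The symmetry $\overline\chi(q-a)=\chi(-1)\overline\chi(a)$ together with the evenness (respectively oddness) of cosine (respectively sine) kills one half of the contribution: only the cosine part survives when $\chi$ is even, only the sine part when $\chi$ is odd. The problem thereby reduces to evaluating the two one-variable Fourier sums
$$ C(\theta)=\sum_{n\geq 1}\frac{F_3(\delta n)\cos(2\pi n\theta)}{n},\qquad S(\theta)=\sum_{n\geq 1}\frac{F_4(\delta n)\sin(2\pi n\theta)}{n} $$
at the rational arguments $\theta=a/q$.

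The heart of the argument, and the step I expect to be the main obstacle, is recognizing $C(\theta)$ and $S(\theta)$ in closed form as piecewise-elementary functions of $\theta\in[0,1]$. For $S$ one substitutes the integral representation $\sinc^2(\delta n)=\int_{-1}^{1}(1-|u|)\e(u\delta n)\,du$, swaps sum and integral, and applies the classical sawtooth identity $\sum_{n\geq 1}\sin(2\pi nx)/n=\pi(\tfrac12-\{x\})$; after simplification, $S(a/q)$ collapses to a scalar multiple of $(1-a/(\delta q))^2$ when $1\le a\le\delta q$ and vanishes beyond that range. The analogue for $C$ is subtler: the factor $2/t+\sum_{m}\sgn(m)/(t-m)^2$ defining $F_3$ matches, via the cotangent partial-fraction expansion $\pi\cot(\pi u)=1/u+\sum_{m\neq0}(1/(u-m)+1/m)$ (and its derivative), the integrand $\pi(1-u)\cot(\pi u)+1$ that appears in the definition of $j$; the outer integral $2\int_{|t|}^{1}$ then emerges after integrating the resulting sawtooth-type expression. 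Substituting these identifications back into the Gauss-sum formula and using $\tau(\chi)\tau(\overline\chi)=\chi(-1)q$ produces exactly the two cases of Proposition~\ref{pro1}.
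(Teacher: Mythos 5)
You should first note that the paper contains no proof of Proposition~\ref{pro1} to compare against: Section~\ref{sec1} imports it verbatim from Ramar\'e \cite{Ramare2001}. Your outline is indeed the strategy of the cited proof --- Gauss-sum inversion $\chi(n)=\chi(-1)\tau(\chi)q^{-1}\sum_{a}\overline{\chi}(a)\e(an/q)$, parity to kill half the kernel, then closed-form evaluation of the surviving one-variable series --- and your odd-case route can be completed exactly as you describe: the Fej\'er identity $1-F_4(t)=\int_{-1}^{1}(1-|u|)\e(ut)\,du$ and the sawtooth give $S(\theta)=\tfrac{\pi}{2}\left(1-\theta/\delta\right)^2$ for $0<\theta\leq\delta$, $S(\theta)=0$ for $\delta\leq\theta\leq 1-\delta$, $S(1-\theta)=-S(\theta)$, and folding $a\leftrightarrow q-a$ yields the finite sum over $m\leq\delta q$. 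The genuine gap sits exactly where you locate ``the heart of the argument'': the even case is asserted, not proved. To make $j$ appear one needs at least the decomposition $F_3(t)=\sum_{m\neq 0}\sgn(m)\,\sinc^2(t-m)+\left(1-\cos(2\pi t)\right)/(\pi^2 t)$, the Abel-summed identity $\sum_{m}\sgn(m)\e(mu)=i\cot(\pi u)$ played against the triangle weight, and then a nontrivial rearrangement of the resulting sawtooth average before the outer integral $2\int_{|t|}^{1}$ surfaces; saying the cotangent expansion ``matches'' is not that computation. Moreover every interchange you invoke involves conditionally convergent objects ($F(\delta n)\to 1$, so $\sum_n F(\delta n)\chi(n)/n$ is not absolutely convergent, and the sawtooth identities hold only in the Abel sense), and none of these justifications appear.

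There is also a concrete symptom that the decisive computation was never run: carried to the end, your plan does \emph{not} ``produce exactly'' the displayed cases under the paper's own conventions ($\e(x)=\exp(2\pi i x)$, $\tau(\chi)=\sum_a\chi(a)\e(a/q)$); the odd case comes out with coefficient $-i\pi\tau(\chi)/q$, not $+i\pi\tau(\chi)/q$. Test $q=4$, $\chi$ the odd character mod $4$, $\delta=1/2$: then $1-F_4(n/2)=\sinc^2(n/2)$ vanishes for even $n$ and equals $4/(\pi^2n^2)$ for odd $n$, so the smoothed sum is $(4/\pi^2)L(3,\chi)=(4/\pi^2)\cdot\pi^3/32=\pi/8$; the dual sum has the single term $\overline{\chi}(1)(1-\tfrac12)^2=\tfrac14$, and $\tau(\chi)=2i$, so the stated right-hand side equals $\pi/8+(i\pi\cdot 2i/4)\cdot\tfrac14=\pi/8-\pi/8=0\neq\pi/4=L(1,\chi)$, while the minus sign gives $\pi/4$ exactly. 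The even case flips likewise: at $\delta=1$ one has $F_3(n)=1$ for all integers $n\geq1$, so the proposition degenerates to $L(1,\chi)=\mp(\tau(\chi)/q)\sum_{m<q}\overline{\chi}(m)j(m/q)$, and comparing with the classical $L(1,\chi)=-(\tau(\chi)/q)\sum_{a}\overline{\chi}(a)\log\left(2\sin(\pi a/q)\right)$ via the exact identity $j(t)+j(1-t)=2-2\log\left(2\sin(\pi t)\right)$ (equal derivatives, equal at $t=1/2$) and $\sum_m\overline{\chi}(m)=0$ forces the $+$ sign. Presumably this is a conventions mismatch in transcribing \cite{Ramare2001}, and it is harmless for Theorem~\ref{Thm4}, where only $|\tau(\chi)|=\sqrt{q}$ is used; but it shows your closing claim of exact agreement cannot stand without actually executing, and sign-checking, the central evaluation.
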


\begin{lem}
\label{ramare4}
Set 
\begin{equation*}
j(t)=2\int\limits_{|t|}^{1}\left(\pi (1-u)\cot (\pi u)+1\right)\, du.
\end{equation*}
Then, we have 
\begin{equation}
\label{eq247}
\int_{0}^{1}j(t)\, dt =1
\end{equation}
and
\begin{equation}
\label{cor13}
-2\log |t|-2\left(\log (2\pi)-1\right)\leq j(t)\leq -2\log |t|.
\end{equation}
\end{lem}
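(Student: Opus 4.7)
The plan is to prove the integral identity by switching the order of integration, and to prove the two-sided bound by establishing monotonicity of $j(t) + 2\log|t|$ on $(0,1]$.

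For the integral identity, Fubini applied to the region $\{0 \le t \le u \le 1\}$ gives
\begin{equation*}
\int_0^1 j(t)\,dt \;=\; 2\int_0^1 u\bigl(\pi(1-u)\cot(\pi u) + 1\bigr)\,du \;=\; 2\pi\int_0^1 u(1-u)\cot(\pi u)\,du + 1.
\end{equation*}
I would then observe that the substitution $u \mapsto 1-u$ fixes $u(1-u)$ but negates $\cot(\pi u)$, so the remaining integrand is antisymmetric about $u = 1/2$ and integrates to $0$, yielding the identity.

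For the two-sided bound, evenness of $j$ lets me assume $t \in (0,1]$, and I set $f(t) := j(t) + 2\log t$. A direct differentiation gives $f'(t) = 2(1-t)\bigl[\tfrac{1}{t} - \pi\cot(\pi t)\bigr]$, and the classical partial-fraction expansion $\pi\cot(\pi t) = \tfrac{1}{t} - \sum_{k\ge 1}\tfrac{2t}{k^2-t^2}$ shows that $\tfrac{1}{t} - \pi\cot(\pi t) > 0$ on $(0,1)$. Hence $f$ is strictly increasing on $(0,1)$. Since $f(1) = j(1) = 0$, this already gives the upper half $j(t) \le -2\log|t|$.

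For the lower half I need the limit $f(0^+) = 2 - 2\log(2\pi)$. My plan is to integrate $\pi(1-u)\cot(\pi u)$ by parts using $\int \pi\cot(\pi u)\,du = \log\sin(\pi u)$, obtaining
\begin{equation*}
j(t) \;=\; -2(1-t)\log\sin(\pi t) + 2\int_t^1 \log\sin(\pi u)\,du + 2(1-t),
\end{equation*}
and then send $t \to 0^+$ using $\log\sin(\pi t) = \log(\pi t) + o(1)$ together with Euler's classical identity $\int_0^1 \log\sin(\pi u)\,du = -\log 2$. The main care needed is with the improper integrals: the boundary term $(1-u)\log\sin(\pi u)$ at $u=1$ vanishes because $\log\sin(\pi u) \sim \log(\pi(1-u))$ as $u \to 1^-$, and the same asymptotic delivers convergence of $\int_0^1 \log\sin(\pi u)\,du$. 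Beyond this bookkeeping around the singularities of $\cot(\pi u)$ at the endpoints, I do not expect any serious obstacle.
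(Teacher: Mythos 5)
Your proof is correct. Note that the paper itself gives no argument for this lemma --- Section~\ref{sec1} is explicitly a list of results quoted from Ramar\'e's paper \cite{Ramare2001} --- so there is no in-paper proof to compare against; your write-up supplies a valid self-contained derivation. All the key steps check out: Tonelli applies since $\int_0^1 u\,\bigl|\pi(1-u)\cot(\pi u)+1\bigr|\,du$ is finite (the integrand is $O(1/u)$ near $0$ and bounded near $1$), the antisymmetry of $u(1-u)\cot(\pi u)$ about $u=1/2$ kills the remaining integral, the derivative computation $f'(t)=2(1-t)\bigl[\tfrac1t-\pi\cot(\pi t)\bigr]$ is right, the partial-fraction expansion gives strict positivity of the bracket on $(0,1)$, and the endpoint values $f(1)=0$ and $f(0^+)=2-2\log(2\pi)$ (via integration by parts and $\int_0^1\log\sin(\pi u)\,du=-\log 2$) deliver exactly the two constants in \eqref{cor13}. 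The monotonicity argument is a clean way to get both bounds simultaneously and shows in passing that both are attained in the limit ($t=1$ for the upper bound, $t\to 0^+$ for the lower), i.e.\ that the constants are sharp.
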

\begin{lem}
\label{ramare2}
For $\delta \in ]0, 1 ]$, we have
\begin{equation*}
\sum\limits_{n\geq 1}\frac{1-F_3(\delta n)}{n}=-\log \delta -1+\delta.
\end{equation*}
where $F_3$ is defined in Proposition~\ref{pro1}.
\end{lem}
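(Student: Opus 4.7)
The plan is to reduce the identity to standard Fourier and special-function computations by first symmetrising the bracket in $F_3$. Using the classical partial-fraction expansion $\sum_{m \in \mathbb{Z}} (t-m)^{-2} = \pi^2/\sin^2(\pi t)$, one can rewrite $1 = (\sin^2(\pi t)/\pi^2)\sum_m (t-m)^{-2}$ and subtract the defining series for $F_3$ to obtain
\begin{equation*}
1 - F_3(t) = \frac{\sin^2\pi t}{\pi^2}\left(\frac{1}{t^2} - \frac{2}{t} + 2\sum_{m\geq 1}\frac{1}{(t+m)^2}\right),
\end{equation*}
where only the terms $m \leq -1$ survive after the cancellation against $\operatorname{sgn}(m)$. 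Substituting $t = \delta n$ and summing over $n \geq 1$ splits the left-hand side of the lemma into three tractable pieces.

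The two ``diagonal'' pieces, coming from $1/t^2$ and $-2/t$, are evaluated using the half-angle formula $\sin^2(\pi\delta n) = (1-\cos(2\pi\delta n))/2$ together with the polynomial Fourier identity $\sum_{n\geq 1}\cos(2\pi\delta n)/n^2 = \pi^2/6 - \pi^2\delta + \pi^2\delta^2$ valid for $\delta \in [0,1]$; in particular, the $-2/t$ contribution works out to $-(1-\delta)$ exactly. The remaining ``off-diagonal'' double sum $\sum_{n,m\geq 1} 2\sin^2(\pi\delta n)/(\pi^2 n(\delta n + m)^2)$ I would handle by exchanging the order of summation and applying the partial-fraction decomposition of $1/(n(\delta n + m)^2)$. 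The inner $n$-sum for fixed $m$ then reduces, via the digamma identity $\psi(1+x) + \gamma = \sum_{n\geq 1}(1/n - 1/(n+x))$, to an explicit expression in $\psi(1 + m/\delta)$ and $\psi'(1 + m/\delta)$. The logarithmic term $-\log\delta$ on the right-hand side appears after summing on $m$, via the asymptotic $\psi(1 + m/\delta) = \log(m/\delta) + O(\delta/m)$.

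The main obstacle is the bookkeeping required for these interchanges, since the individual pieces are only conditionally convergent and one must keep enough terms of the digamma asymptotic to recover the precise constants $-1 + \delta$, not merely the leading $-\log\delta$. A safer route is to verify the identity at the endpoint $\delta = 1$, where both sides vanish because $F_3(n) = 1$ for each positive integer $n$ (seen by expanding the $m = n$ term of the inner sum as $t \to n$ and using $\sin^2(\pi t)/\pi^2 (t-n)^2 \to 1$), and then to show that both sides share the derivative $1 - 1/\delta$ on $(0,1]$. This reduces the lemma to the cleaner identity $\sum_{n\geq 1} F_3'(\delta n) = 1/\delta - 1$, which can be attacked by the same Fourier methods applied to $F_3'$ rather than to $F_3$ itself, with the added bonus that $F_3'$ decays faster and term-wise differentiation is easier to justify.
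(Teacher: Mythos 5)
The paper does not actually prove this lemma: it is imported verbatim from Ramar\'e \cite{Ramare2001} (see the sentence opening Section~\ref{sec1}), so there is no in-paper argument to compare yours against, and your proposal must stand on its own. Its opening move is sound: using $\sum_{m\in\Z}(t-m)^{-2}=\pi^2/\sin^2(\pi t)$ to get $1-F_3(t)=\frac{\sin^2\pi t}{\pi^2}\bigl(\frac{1}{t^2}-\frac{2}{t}+2\sum_{m\geq 1}\frac{1}{(t+m)^2}\bigr)$ is correct, the evaluation of the $-2/t$ piece as $-(1-\delta)$ via $\sum_{n\geq1}\cos(2\pi\delta n)/n^2=\pi^2 B_2(\delta)$ is correct, and the endpoint check $F_3(n)=1$ at $\delta=1$ is correct. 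But the proof has two genuine gaps. First, you claim both ``diagonal'' pieces are evaluated by the $n^{-2}$ Fourier identity; the $1/t^2$ piece is $\frac{1}{\pi^2\delta^2}\sum_{n\geq1}\sin^2(\pi\delta n)/n^{3}$, which requires $\sum_{n\geq1}\cos(2\pi\delta n)/n^{3}$ --- a Clausen-type function, not a polynomial in $\delta$ and not covered by the Bernoulli-polynomial expansion (only the odd-order sine sums are polynomial). This piece is not elementary in isolation; it must cancel against part of the off-diagonal double sum, and that cancellation is precisely where the $-\log\delta$ is hiding. Second, the off-diagonal sum is only described, not computed: ``reduces to an expression in $\psi(1+m/\delta)$ and $\psi'(1+m/\delta)$'' followed by ``the logarithmic term appears after summing on $m$'' skips the entire content of the lemma, since $\sum_m \log(m/\delta)$ diverges and the convergence-producing cancellations between the $m$-terms are the whole difficulty.

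The fallback route does not repair this. Differentiating in $\delta$ converts the lemma into the exactly equivalent statement $\sum_{n\geq1}F_3'(\delta n)=1/\delta-1$, which you leave unproved (``can be attacked by the same Fourier methods''), and the term-by-term differentiation of $\sum_n (1-F_3(\delta n))/n$ itself needs a uniform-convergence justification you do not supply (it is available, since $1-F_3(t)=O(t^{-3})$, but must be stated). As it stands the proposal is a plausible plan with one correctly executed sub-computation, not a proof; to complete it you would either have to carry out the digamma bookkeeping honestly (e.g.\ by rewriting $\frac{1}{t^2}+2\sum_{m\geq1}(t+m)^{-2}=2\psi'(t)-t^{-2}$ and using Abel summation in $n$), or follow Ramar\'e's route through the Fourier-analytic construction of $F_3$.
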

\begin{lem}
\label{ramare5}
For $\delta >0$, we have
\begin{equation*}
\sum\limits_{n\geq 1}\frac{1-F_4(\delta n)}{n}=-\log \delta +\frac{3}{2}-\log (2\pi)+2\int\limits_{0}^{1}(1-t)\log \left|\frac{\pi \delta t}{\sin (\pi \delta t)}\right|\, dt.
\end{equation*}
where $F_4$ is defined in Proposition~\ref{pro1}.
\end{lem}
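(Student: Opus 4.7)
The key input is the Fourier series identity
\begin{equation*}
-\log|2\sin(\pi v)| = \sum_{n\geq 1}\frac{\cos(2\pi n v)}{n},
\end{equation*}
obtained as the real part of $-\log(1-z) = \sum_{n\geq 1}z^{n}/n$ evaluated at $z = \exp(2\pi i v)$. Since its Fourier coefficients $1/n$ are square-summable, this converges in $L^{2}(\R/\Z)$. My strategy is to start from the stated right-hand side of the lemma and reduce it to the series on the left via this expansion.

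First I would perform an algebraic reduction inside the target integral. Expanding
\begin{equation*}
\log\left|\frac{\pi\delta u}{\sin(\pi\delta u)}\right| = \log(\pi\delta) + \log u - \log|\sin(\pi\delta u)|
\end{equation*}
and using the elementary evaluation $\int_{0}^{1}(1-u)\log u\,\D u = -3/4$ (integration by parts), the $\log\delta$, $\log\pi$, and constant $3/2$ contributions in the claimed right-hand side all cancel, leaving
\begin{equation*}
-\log 2 - 2\int_{0}^{1}(1-u)\log|\sin(\pi\delta u)|\,\D u = -2\int_{0}^{1}(1-u)\log|2\sin(\pi\delta u)|\,\D u.
\end{equation*}
Next I would substitute the Fourier series with $v=\delta u$ and swap summation with integration to get
\begin{equation*}
2\sum_{n\geq 1}\frac{1}{n}\int_{0}^{1}(1-u)\cos(2\pi\delta n u)\,\D u.
\end{equation*}
A single integration by parts shows each inner integral equals $\sin^{2}(\pi\delta n)/(2\pi^{2}\delta^{2}n^{2})$, so the expression collapses to $\sum_{n\geq 1}\frac{1}{n}\bigl(\sin(\pi\delta n)/(\pi\delta n)\bigr)^{2} = \sum_{n\geq 1}(1-F_{4}(\delta n))/n$, closing the loop.

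The main technical obstacle is justifying the interchange of sum and integral, since $\sum_{n}\cos(2\pi\delta nu)/n$ is only conditionally convergent and Abel summation yields only the non-integrable bound $O(1/|\sin(\pi\delta u)|)$ on its partial sums. I would side-step this by working in $L^{2}$ rather than pointwise: after the change of variable $v=\delta u$ (summing over fundamental periods of $\R/\Z$ if $\delta>1$), the weight $(1-u)$ pulls back to an $L^{\infty}\subset L^{2}$ function, so Cauchy--Schwarz combined with the $L^{2}(\R/\Z)$-convergence of the Fourier series of $-\log|2\sin(\pi v)|$ legitimises term-by-term integration, and the rest of the argument is bookkeeping.
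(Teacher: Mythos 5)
Your proof is correct. Note that the paper itself contains no proof of this lemma: it sits in Section~\ref{sec1} (``Lemmas from elsewhere'') and is imported from Ramar\'e \cite{Ramare2001}, so the only meaningful comparison is with Ramar\'e's argument, which is in substance the same computation as yours run in the forward direction --- one recognizes $\left(\sin(\pi\delta n)/(\pi\delta n)\right)^2 = 2\int_0^1(1-t)\cos(2\pi\delta n t)\,dt$ (the Fourier transform of the triangle kernel), sums over $n$, and invokes the expansion $-\log|2\sin(\pi v)|=\sum_{n\geq 1}\cos(2\pi n v)/n$, exactly the identity you start from. Your bookkeeping checks out ($\int_0^1(1-u)\log u\,du=-3/4$, the cancellation of the $\log\delta$, $\log\pi$ and $3/2$ terms, absorption of $-\log 2$ into the weight, and the inner integral $(1-\cos(2\pi\delta n))/(2\pi\delta n)^2=\sin^2(\pi\delta n)/(2\pi^2\delta^2n^2)$), and your $L^2$/Cauchy--Schwarz justification of the term-by-term integration --- including the remark that for $\delta>1$ one splits $[0,\delta]$ into finitely many period pieces, which is what makes the lemma valid for all $\delta>0$ rather than just $\delta\in\left]0,1\right]$ as in Lemma~\ref{ramare2} --- is sound and is, if anything, more careful than what is typically written down for this step.
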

\begin{lem}
\label{ramare31}
Let $k$ and $\beta>0$ be two real numbers. Let $f$ be a continuous, convex and non-increasing $\mathds{L}^1$-function on $]k-\theta,\, k+\beta]$. We have 
\begin{equation*}
f(k)\leq \frac{1}{\theta}\int_{k-\theta}^{k}f(t)\, dt -\theta\frac{f(k)-f(k+\beta)}{2\beta}.
\end{equation*}
\end{lem}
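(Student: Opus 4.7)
The plan is to bound $f$ from below on $[k-\theta, k]$ by the secant line through $(k, f(k))$ and $(k+\beta, f(k+\beta))$, and then integrate that bound. Writing
\[
s := \frac{f(k+\beta) - f(k)}{\beta}, \qquad L(t) := f(k) + s\,(t - k),
\]
the whole argument reduces to verifying the pointwise inequality $f(t) \geq L(t)$ on $[k-\theta, k]$; once that is in hand, a one-line integration and rearrangement produces the stated bound.

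For the pointwise step, fix $t \in [k-\theta, k)$. Since $f$ is convex on $]k-\theta, k+\beta]$ and $t < k < k+\beta$, the three-slopes (``increasing chord'') inequality gives
\[
\frac{f(k) - f(t)}{k - t} \;\leq\; \frac{f(k+\beta) - f(k)}{\beta} \;=\; s,
\]
and since $k - t > 0$ this rearranges to $f(t) \geq f(k) + s\,(t-k) = L(t)$. Integrating over $[k-\theta, k]$ yields
\[
\int_{k-\theta}^{k} f(t)\, dt \;\geq\; \int_{k-\theta}^{k} \bigl[f(k) + s(t-k)\bigr]\, dt \;=\; \theta\, f(k) \;-\; \frac{s\,\theta^{2}}{2}.
\]
Dividing by $\theta$ and using $-s = (f(k) - f(k+\beta))/\beta$ produces exactly the claimed inequality.

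I do not expect a genuine obstacle. Continuity together with convexity already guarantees integrability on any compact subinterval, so the $\mathds{L}^{1}$ hypothesis is automatic here; the non-increasing hypothesis is used only to ensure that the correction term $\theta\,(f(k) - f(k+\beta))/(2\beta)$ is non-negative, which is what makes the lemma a genuine improvement over the crude mean-value bound $f(k) \leq \theta^{-1}\int_{k-\theta}^{k} f(t)\, dt$. The only point requiring minor care is invoking the three-slopes inequality with the correct ordering $t < k < k+\beta$, which the domain $]k-\theta,\, k+\beta]$ is precisely engineered to supply.
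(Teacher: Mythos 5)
Your argument is correct: the three-slopes inequality applied to $t<k<k+\beta$ does give $f(t)\geq f(k)+s(t-k)$ on the left interval, and integrating and rearranging yields exactly the stated bound. The paper itself offers no proof of this lemma (it is quoted from Ramar\'e's 2001 paper), and yours is the standard secant-line argument one would expect there, so there is nothing to contrast. One small inaccuracy in your closing remarks: the $\mathds{L}^1$ hypothesis is \emph{not} automatic, because the interval $]k-\theta,\,k+\beta]$ is open at its left endpoint and $f$ may be unbounded there --- indeed in the application of this lemma the function is essentially $j(t)\sim -2\log|t|$ near $0$; the hypothesis is what guarantees $\int_{k-\theta}^{k}f(t)\,dt$ is finite, though your pointwise bound and its integrated form remain valid (trivially so if the integral were $+\infty$), so this does not affect the correctness of the proof.
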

\begin{lem}
\label{ramare6}
For $0<\delta \leq 1/2$, we have 
\begin{equation}
2\int_{0}^{1}\left(1-t\right)\log \left|\frac{\pi \delta t}{\sin (\pi \delta t)}\right|\, dt
\leq \frac{\pi^3 \delta^2}{12}.
\end{equation}
\end{lem}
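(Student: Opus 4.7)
The plan is to reduce the integral estimate to a pointwise bound on the integrand. Substituting $x = \pi\delta t$, the constraints $\delta \in (0, 1/2]$ and $t \in [0,1]$ confine $x$ to $[0, \pi/2]$, and since $\int_{0}^{1}(1-t)t^{2}\,dt = 1/12$, it suffices to establish a pointwise estimate of the form $\log(x/\sin x) \leq C x^{2}$ on $(0, \pi/2]$ with $C \leq \pi/2$. Indeed, such an estimate gives
\[
2\int_{0}^{1}(1-t)\log\!\left|\frac{\pi\delta t}{\sin(\pi\delta t)}\right|dt \;\leq\; 2C(\pi\delta)^{2}\cdot\tfrac{1}{12} \;=\; \tfrac{C\pi^{2}\delta^{2}}{6},
\]
and taking $C = \pi/2$ yields exactly $\pi^{3}\delta^{2}/12$.

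The cleanest route I see to the pointwise estimate is via the inequality $\log v \leq v - 1$ (equivalently $-\log u \leq (1-u)/u$), applied to $u = \sin x / x$:
\[
\log\frac{x}{\sin x} \;\leq\; \frac{x-\sin x}{\sin x}.
\]
From here, two standard estimates valid on $[0,\pi/2]$---the Taylor inequality $x-\sin x \leq x^{3}/6$ (proved by tracking the signs of successive derivatives of $x^{3}/6 - (x-\sin x)$) and the concavity bound $\sin x \geq 2x/\pi$---combine to yield $\log(x/\sin x) \leq \pi x^{2}/12$, which is well inside the required ceiling $C = \pi/2$.

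I do not anticipate any real obstacle: both auxiliary inequalities are routine, and the stated bound $\pi^{3}\delta^{2}/12$ is in fact loose. As a sanity check on the constants, taking $C = \pi/12$ (the value my argument actually yields) produces an upper bound of $\pi^{3}\delta^{2}/72$, a factor of six below the target, so there is ample slack and the final numerical verification is immediate.
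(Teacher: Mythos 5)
Your proof is correct. Note that the paper does not actually prove this lemma: it appears in Section~\ref{sec1} (``Lemmas from elsewhere'') and is simply imported from Ramar\'e's 2001 paper, so there is nothing in the text to compare against line by line. Your argument is a clean, self-contained replacement: the substitution $x=\pi\delta t$ keeps $x$ in $[0,\pi/2]$, the chain
\[
\log\frac{x}{\sin x}\;\leq\;\frac{x-\sin x}{\sin x}\;\leq\;\frac{x^{3}/6}{2x/\pi}\;=\;\frac{\pi x^{2}}{12}
\]
is valid there (the first step is $\log v\leq v-1$, the second combines $x-\sin x\leq x^{3}/6$, provable by two differentiations, with Jordan's inequality $\sin x\geq 2x/\pi$), and $\int_{0}^{1}(1-t)t^{2}\,dt=1/12$ then gives the bound $\pi^{3}\delta^{2}/72$, a factor of $6$ stronger than what the lemma asserts. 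The absolute values are harmless since $x/\sin x\geq 1$ on the relevant range. The only cosmetic quibble is that the opening paragraph announces the target constant $C=\pi/2$ before the argument delivers $C=\pi/12$; in a final write-up you should simply state and prove the pointwise bound with $\pi/12$ and deduce the (loose) claimed inequality at the end.
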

\section{Some auxiliary lemmas}
\label{sec2}
\begin{lem}
\label{lem41}
Let $\alpha>0$ and $a$ be $1$ or $2$. Let $g$ be a continuous, convex, non-negative and non-increasing $\mathds{L}^1$-function on $]0,1]$. Then 
\begin{equation*}
\sum_{\substack{ a\leq n\leq \alpha \\ n=3k+a}}g\left(\frac{n}{\alpha}\right)
\leq
        \frac{1}{3}\int_{0}^{\alpha}g\left(\frac{u}{\alpha}\right)\, du
         +\frac{1}{2}\, g(1)
        +\frac{2-a}{2}\, g\left(\frac{a}{\alpha}\right)
        +\frac{a}{6}\, g\left(\frac{2a}{\alpha}\right)
        -\frac{1}{2}\, g\left(\frac{a+3}{\alpha}\right)
\end{equation*}
\end{lem}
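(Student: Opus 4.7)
I would prove this by combining Lemma~\ref{ramare31} applied individually to the first summand with a telescoping use of the same lemma on the remaining summands. Set $f(t) := g(t/\alpha)$ on $(0,\alpha]$; this inherits continuity, convexity, non-negativity, non-increasingness, and $L^1$-ness from $g$. If desired I would extend $f$ past $\alpha$ by the constant value $g(1)$, which is routinely checked to preserve all of these properties (the left derivative of $f$ at $\alpha$ is non-positive, while the right derivative of the extension is zero). After the change of variable $u = \alpha t$, the sum equals $f(a) + \sum_{j=1}^{J} f(a+3j)$, where $J$ is the largest integer with $n_{\max} := a+3J \leq \alpha$, so $n_{\max}+3 > \alpha$ and $f(n_{\max}+3) = g(1)$ under the extension.

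For each $j \in \{1,\ldots,J\}$ I would apply Lemma~\ref{ramare31} with $(k,\theta,\beta) = (a+3j,\,3,\,3)$, obtaining
$$f(a+3j) \leq \frac{1}{3}\int_{a+3(j-1)}^{a+3j} f(t)\, dt - \frac{1}{2}\bigl(f(a+3j) - f(a+3j+3)\bigr).$$
Summing over $j$, the integrals concatenate into $\frac{1}{3}\int_a^{n_{\max}} f(t)\, dt$, while the correction terms telescope to $-\tfrac{1}{2}f(a+3) + \tfrac{1}{2}f(n_{\max}+3)$.

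For the first summand $f(a)$, I would then apply Lemma~\ref{ramare31} with $(k,\theta,\beta)=(a,a,a)$ to obtain $\tfrac{3}{2}f(a) \leq \tfrac{1}{a}\int_0^a f(t)\, dt + \tfrac{1}{2}f(2a)$. Multiplying by $a/3$ and splitting $f(a) = \tfrac{a}{2}f(a) + \tfrac{2-a}{2}f(a)$ yields exactly
$$f(a) \leq \frac{1}{3}\int_0^a f(t)\, dt + \frac{a}{6}f(2a) + \frac{2-a}{2}f(a).$$
Adding this to the tail bound merges $\tfrac{1}{3}\int_0^a f + \tfrac{1}{3}\int_a^{n_{\max}} f = \tfrac{1}{3}\int_0^{n_{\max}} f \leq \tfrac{1}{3}\int_0^\alpha f$ (using $f \geq 0$), identifies $f(n_{\max}+3) = g(1)$, and assembles precisely the boundary corrections claimed in the statement. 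Reverting $f$ to $g$ finishes the proof.

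The main point — and the only real choice in the argument — is recognizing that $\theta = \beta = a$ is the correct specialization of Lemma~\ref{ramare31} for the first term: these parameters are what produce the otherwise mysterious coefficients $\tfrac{2-a}{2}$ and $\tfrac{a}{6}$ on the right-hand side of the lemma. Once this alignment is in place, everything else reduces to a routine telescoping computation together with the hypotheses $f\geq 0$ and $f$ non-increasing (the latter encoded by the constant extension past $\alpha$).
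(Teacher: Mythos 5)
Your proof is correct and follows essentially the same route as the paper: Lemma~\ref{ramare31} with $\theta=\beta=a$ for the first summand, and a step-$3$ telescoping application of the same lemma for the remaining summands, assembled in the same way. The only (harmless) difference is at the final summand, where you use a constant extension of $g(t/\alpha)$ past $t=\alpha$ so that $\beta=3$ works uniformly, whereas the paper applies the lemma there with $\theta=3$ and $\beta=\alpha-(3K+a)\le 3$ and then invokes monotonicity; both yield the same $\tfrac{1}{2}\,g(1)$ contribution.
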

\begin{proof}
Let $3K+a$ be the largest integer less than or equal to $\alpha$, then we have 
\begin{equation}
\label{eq243}
\sum_{\substack{ a\leq n\leq 3K+a \\n=3k+a}}g\left(\frac{n}{\alpha}\right)
       =
       g\left(\frac{a}{\alpha}\right)
       +
        \sum_{\substack{ a+3\leq n\leq 3K+a\\n=3k+a}}g\left(\frac{n}{\alpha}\right).
\end{equation}  
We write the sum on the right side-hand of this equality as
\begin{equation*}
       \sum_{\substack{a+3\leq n\leq 3K+a \\n=3k+a}}g\left(\frac{n}{\alpha}\right)
=
       \sum_{1\leq k\leq K}g\left(\frac{3k+a}{\alpha}\right).
\end{equation*}
Using Lemma~\ref{ramare31} with $\theta=\beta=1$, we get  
\begin{eqnarray*}
\sum_{k=1}^{K-1}g\left(\frac{3k+a}{\alpha}\right)
&\leq&
 \sum_{k=1}^{K-1}\int_{k-1}^{k}g\left(\frac{3t+a}{\alpha}\right)\, dt-\sum_{k=1}^{K-1}\frac{g\left(\frac{3k+a}{\alpha}\right)-g\left(\frac{3k+a+3}{\alpha}\right)}{2},
\\&\leq&
        \int_{0}^{K-1}g\left(\frac{3t+a}{\alpha}\right)\, dt
  -
        \frac{g\left(\frac{a+3}{\alpha}\right)-g\left(\frac{3K+a}{\alpha}\right)}{2}.
\end{eqnarray*}
By making the simple change of variable $3t+a=u$ in the integral above, we obtain 
\begin{equation}
\label{eq242}
        \sum_{k=1}^{K-1}g\left(\frac{3k+a}{\alpha}\right)
\leq
        \frac{1}{3}\int_{a}^{3K+a-3}g\left(\frac{u}{\alpha}\right)\, du
 -
        \frac{g\left(\frac{a+3}{\alpha}\right)-g\left(\frac{3K+a}{\alpha}\right)}{2}.
\end{equation}
We again use Lemma~\ref{ramare31} but this time with $\theta=3$ and $\beta=\alpha-(3K+a)$. When $\beta=0$ the proof is complete. Otherwise $\beta \leq 3$ and we get
\begin{equation*}
        g\left(\frac{3K+a}{\alpha}\right)
\leq 
        \frac{1}{3}\int_{3K+a-3}^{3K+a}g\left(\frac{u}{\alpha}\right)\, du 
 -
        \frac{3\left(g\left(\frac{3K+a}{\alpha}\right)-g(1)\right)}{2\beta}.
\end{equation*}
Substituting this in Eq~\eqref{eq242} with $\beta\leq 3$, we find that
\begin{equation*}
        \sum_{k=1}^Kg\left(\frac{3k+a}{\alpha}\right)
\leq 
        \frac{1}{3}\int_{a}^{3K+a}g\left(\frac{u}{\alpha}\right)\, du -\frac{g(\frac{a+3}{\alpha})-g(1)}{2}.
\end{equation*}
Then Eq~\eqref{eq243} becomes
\begin{equation}
\label{eq244}
         \sum_{\substack {a\leq n\leq 3K+a\\n=3k+a}}g\left(\frac{n}{\alpha}\right)
\leq
         \frac{1}{3}\int_{a}^{3K+a}g\left(\frac{u}{\alpha}\right)\, du 
  -
         \frac{g(\frac{a+3}{\alpha})-g(1)}{2}
  +
         g\left(\frac{a}{\alpha}\right).
\end{equation}
Now, we apply Lemma~\eqref{ramare31} to $g(a/\alpha)$ with $\theta=\beta=a$, to get
\begin{equation*}
\label{equ82}
         g\left(\frac{a}{\alpha}\right)\leq \frac{1}{a}\int_{0}^{a}g\left(\frac{u}{\alpha}\right)\, du
   -
         \frac{g(a/\alpha)-g(2a/\alpha)}{2},
\end{equation*}
it follows that
\begin{equation*}
  \frac{3}{2}g\left(\frac{a}{\alpha}\right)
       \leq 
  \frac{1}{a}\int_{0}^{a}g\left(\frac{u}{\alpha}\right)\, du +\frac{g(2a/\alpha)}{2}.
\end{equation*}
Multiplying this with $a/3$, we find that  
\begin{equation*}
  \frac{a}{2}\, g\left(\frac{a}{\alpha}\right)
       \leq 
  \frac{1}{3}\int_{0}^{a}g\left(\frac{u}{\alpha}\right)\, du +\frac{a}{6}\, g\left(\frac{2a}{\alpha}\right).
\end{equation*}
Here, we have to distinguish two cases.
\begin{itemize}
\item The first case when $a=1$, and write 
\begin{equation*}
  \frac{g(1 /\alpha)}{2}
       \leq 
  \frac{1}{3}\int_{0}^{1}g\left(\frac{u}{\alpha}\right)\, du +\frac{g(2/\alpha)}{6}.
\end{equation*}
Then, Eq~\eqref{eq244} becomes
\begin{equation}
\label{eq245}
       \sum_{\substack{ 1\leq n\leq 3K+1\\n=3k+a}}g\left(\frac{n}{\alpha}\right)
\leq 
        \frac{1}{3}\int_{0}^{3K+1}g\left(\frac{u}{\alpha}\right)\, du 
          +\frac{g(1)}{2}
        +\frac{g(1/\alpha)}{2}
        +\frac{g(2/\alpha)}{6}
        -\frac{g(4/\alpha)}{2}.
\end{equation}
\item The second case when $a=2$, we rewrite
 \begin{equation*}
  g\left(\frac{2}{\alpha}\right)
       \leq 
  \frac{1}{3}\int_{0}^{2}g\left(\frac{u}{\alpha}\right)\, du +\frac{g(4/\alpha)}{3}.
\end{equation*}
Then, Eq~\eqref{eq244} becomes 
\begin{equation}
\label{eq246}
       \sum_{\substack{2\leq n\leq 3K+2\\n=3k+a}}g\left(\frac{n}{\alpha}\right)
\leq 
        \frac{1}{3}\int_{0}^{3K+2}g\left(\frac{u}{\alpha}\right)\, du 
          +\frac{g(1)}{2}
        +\frac{g(4/\alpha)}{3}
        -\frac{g(5/\alpha)}{2}
\end{equation}
\end{itemize}
From Eq~\eqref{eq245} and Eq~\eqref{eq246}, we conclude that  
\begin{equation*}
       \sum_{\substack{a\leq n\leq \alpha \\n=3k+a}} g\left(\frac{n}{\alpha}\right)
\leq
        \frac{1}{3}\int_{0}^{3K+a}g\left(\frac{u}{\alpha}\right)\, du 
        +\frac{1}{2}\, g(1)
        +\frac{2-a}{2}\, g\left(\frac{a}{\alpha}\right)
        +\frac{a}{6}\, g\left(\frac{2a}{\alpha}\right)
        -\frac{1}{2}\, g\left(\frac{a+3}{\alpha}\right).
\end{equation*}
We complete the integral from $(3K+a)$ to $\alpha$ by using the non-negativity of $g$ and get
\begin{equation*}
       \sum_{\substack{ a\leq n\leq 3K+a\\n=3k+a}} g\left(\frac{n}{\alpha}\right)
\leq
        \frac{1}{3}\int_{0}^{\alpha}g\left(\frac{u}{\alpha}\right)\, du 
        +\frac{1}{2}\, g(1)
        +\frac{2-a}{2}\, g\left(\frac{a}{\alpha}\right)
        +\frac{a}{6}\, g\left(\frac{2a}{\alpha}\right)
        -\frac{1}{2}\, g\left(\frac{a+3}{\alpha}\right).
\end{equation*}
This completes the proof.
\end{proof}
\begin{lem}
\label{lem42}
For $0<\delta \leq 1/2$, we have
\begin{equation*}
\int_{0}^{1}\left(1-t\right)\left(\log \left|\frac{\pi \delta t}{\sin (\pi \delta t)}\right|-\frac{1}{3}\log \left|\frac{3\pi \delta t}{\sin (3\pi \delta t)}\right|\right)\, dt\leq \frac{\pi^3\delta ^2}{36}+\frac{\pi^2\delta ^2}{27}.
\end{equation*}
\end{lem}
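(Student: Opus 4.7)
The plan is to exploit the triple-angle identity $\sin(3x)=\sin x\,(1+2\cos 2x)$ (equivalent to $\sin(3x)=3\sin x-4\sin^3 x$ via $\cos(2x)=1-2\sin^2 x$). Taking absolute values and logarithms, and rearranging, this gives the pointwise identity
\[
\log\!\left|\frac{x}{\sin x}\right|-\tfrac13\log\!\left|\frac{3x}{\sin(3x)}\right|
=\tfrac23\log\!\left|\frac{x}{\sin x}\right|+\tfrac13\log\!\left|\frac{1+2\cos(2x)}{3}\right|,
\]
wherever both sides are finite. This rewriting is the whole point of the argument: it isolates a piece to which Lemma~\ref{ramare6} applies directly, plus a piece with a favourable sign.

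Substituting $x=\pi\delta t$, multiplying by $(1-t)$, and integrating over $[0,1]$, the integral in the lemma equals $\tfrac23 I_1+\tfrac13 I_2$, where
\[
I_1=\int_0^1(1-t)\log\!\left|\frac{\pi\delta t}{\sin(\pi\delta t)}\right|dt,\qquad I_2=\int_0^1(1-t)\log\!\left|\frac{1+2\cos(2\pi\delta t)}{3}\right|dt.
\]
Lemma~\ref{ramare6} immediately yields $I_1\le\pi^3\delta^2/24$, and hence $\tfrac23 I_1\le\pi^3\delta^2/36$, which already accounts for the first summand on the right of the claim. For $I_2$ I would use only the elementary inequality $|1+2\cos y|\le 3$, valid for every real $y$: this makes the integrand of $I_2$ pointwise non-positive, so $I_2\le 0$. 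Summing, the target integral is bounded by $\pi^3\delta^2/36$, which sits comfortably inside $\pi^3\delta^2/36+\pi^2\delta^2/27$.

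The conceptual obstacle is really just spotting that the triple-angle identity cleanly splits the awkward $\log|3\pi\delta t/\sin(3\pi\delta t)|$ term into a Lemma~\ref{ramare6}-controlled part and a sign-favourable remainder. The only technical point that needs a remark is that when $\delta\in(1/3,1/2]$ the factor $1+2\cos(2\pi\delta t)$ vanishes at $t=1/(3\delta)\in(2/3,1]$, producing a logarithmic blow-up in the integrand of $I_2$; this matches exactly the corresponding blow-up of $\log|3\pi\delta t/\sin(3\pi\delta t)|$ in the original integrand, so the pointwise identity is valid almost everywhere and the singularity is integrable, legitimising the rearrangement. Under this route the $\pi^2\delta^2/27$ cushion in the stated bound is never actually needed, which is a useful sanity check that nothing has been lost.
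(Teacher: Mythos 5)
Your proof is correct and follows essentially the same route as the paper: both split the integrand via the triple-angle identity $\sin(3x)=\sin x\,(3-4\sin^2 x)$ into $\tfrac23\log\left|\pi\delta t/\sin(\pi\delta t)\right|$ plus a remainder, and both control the first piece with Lemma~\ref{ramare6} to obtain $\pi^3\delta^2/36$. The only divergence is in the remainder term: the paper bounds $\log\left|\tfrac43\sin^2(\pi\delta t)-1\right|$ above by $\tfrac43(\pi\delta t)^2$ using $\log u\le u-1$, which is what produces the $\pi^2\delta^2/27$ summand, whereas you note that $|3-4\sin^2(\pi\delta t)|\le 3$ makes that piece non-positive and simply discard it --- a slightly sharper and cleaner finish (with the interior singularity for $\delta>1/3$ correctly dismissed as integrable) that shows the second summand of the stated bound is not actually needed.
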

\begin{proof}
We begin with 
\begin{equation*}
\log \left|\frac{3\pi \delta t}{\sin (3\pi \delta t)}\right|= \log 3+\log (\pi \delta t)-\log \left|\sin (3\pi \delta t)\right|.
\end{equation*}
Since 
\begin{equation*}
\sin (3x) = -4\sin^3 x+3\sin x,
\end{equation*}
it follows that 
\begin{eqnarray*}
\log \left|\frac{3\pi \delta t}{\sin (3\pi \delta t)}\right|&=&
 \log 3+\log (\pi \delta t)-\log \left|\sin (\pi \delta t)\right|-\log \left|4\sin^2 (\pi \delta t)-3\right|
 \\&=&
 \log \frac{\pi\delta t}{\left|\sin (\pi\delta t)\right|}-\log \left|\frac{4}{3}\sin^2 (\pi \delta t)-1\right|.
\end{eqnarray*}
Then
\begin{multline}
\label{equ83}
\int_{0}^{1}\left(1-t\right)\left(\log \left|\frac{\pi \delta t}{\sin (\pi \delta t)}\right|-\frac{1}{3}\log \left|\frac{3\pi \delta t}{\sin (3\pi \delta t)}\right|\right)\, dt=\\ \frac{2}{3}\int_{0}^{1}\left(1-t\right)\log \left|\frac{\pi \delta t}{\sin (\pi \delta t)}\right|\, dt
+\frac{1}{3}\int_{0}^{1}(1-t)\log \left|\frac{4}{3}\sin^2 (\pi \delta t)-1\right|\, dt.
\end{multline}
Since $\log x\leq x-1$ and $\sin x\leq x$ if $0\leq x\leq \pi/2$, we see that 
\begin{eqnarray*}
\log \left|\frac{4}{3}\sin^2 (\pi \delta t)-1\right|
&\leq& 
\left|\frac{4}{3}\sin^2 (\pi \delta t)-1\right|-1
\\&\leq &
\frac{4}{3}\sin^2 (\pi \delta t)\leq \frac{4}{3}(\pi\delta t)^2.
\end{eqnarray*}
Hence, we can write Eq~\eqref{equ83} as  
\begin{multline*}
\int_{0}^{1}\left(1-t\right)\left(\log \left|\frac{\pi \delta t}{\sin (\pi \delta t)}\right|-\frac{1}{3}\log \left|\frac{3\pi \delta t}{\sin (3\pi \delta t)}\right|\right)\, dt
\leq 
\\
 \frac{2}{3}\int_{0}^{1}\left(1-t\right)\log \left|\frac{\pi \delta t}{\sin (\pi \delta t)}\right|\, dt
+\frac{4(\pi\delta)^2}{9}\int_{0}^{1}(1-t)t^2\, dt. 
\end{multline*}
Using Lemma~\ref{ramare6}, we infer
\begin{equation*}
\int_{0}^{1}\left(1-t\right)\left(\log \left|\frac{\pi \delta t}{\sin (\pi \delta t)}\right|-\frac{1}{3}\log \left|\frac{3\pi \delta t}{\sin (3\pi \delta t)}\right|\right)\, dt
\leq \frac{\pi^3\delta ^2}{36}+\frac{\pi^2\delta ^2}{27}
\end{equation*}
and this completes the proof.
\end{proof}
\section[Proof of the main result]{Proof of Theorem~\ref{Thm4}}
\label{sec3}
We are now ready to prove our upper bound for $\left|L(1, \chi)\right|$ when $3$ divides the conductor $q$. We break the proof into two cases:
\begin{itemize}
\item For even characters, we have:
\begin{equation*}
\sum_{\substack{n\geq 1\\(n,3)=1}}
       \frac{1-F_3(\delta n)}{n}=\sum_{n\geq 1}\frac{1-F_3(\delta n)}{n}
       -\sum_{n\geq 1}\frac{1-F_3(3\delta n)}{3n}
\end{equation*}
where $F_{3}$ is defined by Proposition~\ref{pro1}. Thanks to  Lemma~\ref{ramare2}, we can write the sums in the right-hand side above as 
\begin{equation*}
\sum_{n\geq 1}\frac{1-F_{3}(\delta n)}{n}=-\log \delta-1+\delta
\end{equation*}
and 
\begin{equation*}
\sum_{n\geq 1}\frac{1-F_{3}(3\delta n)}{n}=-\log (3\delta)-1+3\delta 
\end{equation*}
respectively.
Again using Proposition~\ref{pro1} and recalling $0<\delta \leq 1$, we get 
\begin{equation}
\label{cor11}
\left|L(1, \chi)\right|\leq 
        -\frac{2}{3}\log \delta 
        +\frac{1}{3}\log 3-\frac{2}{3}
        +\frac{1}{\sqrt{q}}\sum_{\substack{1\leq m\leq \delta q \\(m,3)=1}}j\left(\frac{m}{\delta q}\right)
\end{equation}
where $j(t)$ is defined in Lemma~\ref{ramare4}.  
Now, we apply Lemmas~\ref{ramare4} and \ref{lem41} of the sum of $j(t)$ given in Eq~\eqref{cor11} to obtain
\begin{eqnarray*}
        \sum_{\substack{ 1\leq m\leq \delta q \\ (m,3)=1}}j\left(\frac{m}{\delta q}\right)
&=&
        \sum_{\substack{1\leq m\leq \delta q\\m=3k+1}}j\left(\frac{m}{\delta q}\right)
+
        \sum_{\substack{2\leq m\leq \delta q\\m=3k+2 }}j\left(\frac{m}{\delta q}\right)
\\&\leq&     
         \frac{2\delta q}{3}
          +j(1)
          +\frac{1}{2}\, j\left(\frac{1}{\delta q}\right)
          +\frac{1}{6}\, j\left(\frac{2}{\delta q}\right)
          -\frac{1}{6}\, j\left(\frac{4}{\delta q}\right)
          -\frac{1}{2}\, j\left(\frac{5}{\delta q}\right).
\end{eqnarray*} 
Using Lemma~\ref{ramare4} again, we find that 
\begin{equation*}
        \sum_{\substack{1\leq m\leq \delta q \\ (m,3)=1}}j\left(\frac{m}{\delta q}\right)
\leq     
         \frac{2\delta q}{3}
          +\frac{5}{3}\log 2
          +\log 5
          +\frac{4}{3}\left(\log \pi -1\right).
\end{equation*} 
Then, Eq~\eqref{cor11} becomes simply
\begin{equation*}
\left|L(1, \chi)\right|\leq 
        -\frac{2}{3}\log \delta 
        +\frac{1}{3}\log 3-\frac{2}{3}
        +\frac{2\delta\sqrt{q}}{3}
        +\frac{1}{\sqrt{q}}\left( 
          \frac{5}{3}\log 2
          +\log 5
          +\frac{4}{3}\log \pi -\frac{4}{3}\right).
\end{equation*}
The best possible choice for $\delta$ is $1/\sqrt{q}$. This yields
\begin{equation}
\label{equ84}
\left|L(1, \chi)\right|\leq 
        \frac{1}{3}\log q 
        +\frac{1}{3}\log 3
        +\frac{1}{\sqrt{q}}\left( 
        \frac{5}{3}\log 2
        +\log 5
        +\frac{4}{3}\log \pi -\frac{4}{3}\right),
\end{equation}
where $\frac{1}{3}\log 3=0.3662\cdots$ and the error term depends only on $q$. Setting
\begin{equation}
C_{\textrm{even}}(q)= \frac{1}{3}\log 3+\frac{1}{\sqrt{q}}\left( 
        \frac{5}{3}\log 2
        +\log 5
        +\frac{4}{3}\log \pi -\frac{4}{3}\right), 
\end{equation}
it follows that 
\begin{equation}
\left|L(1, \chi)\right|\leq \tfrac{1}{3}\log q + C_{\textrm{even}}(q).
\end{equation}
\item In the case of odd characters, we have: 
\begin{equation*}
\sum_{\substack{ n\geq 1 \\ (n,3)=1}}\frac{1-F_{4}(\delta n)}{n}
=
       \sum_{n\geq 1}\frac{1-F_{4}(\delta n)}{n}
 -
       \sum_{n\geq 1}\frac{1-F_{4}(3\delta n)}{3n}
\end{equation*}
where $F_4$ is defined in Proposition~\ref{pro1}. Thanks to Lemma~\ref{ramare5}, we get
\begin{multline*}
 \sum_{\substack{  n\geq 1\\ (n,3)=1}}
        \frac{1-F_{4}(\delta n)}{n}=
         -\log \delta -\log (2\pi)+\frac{3}{2}
        +2\int_{0}^{1}(1-t)\log \left|\frac{\pi \delta t}{\sin (\pi \delta t)}\right|\, dt
       \\
       +\frac{1}{3}\log \delta
        +\frac{1}{3}\log (2\pi)
        -\frac{1}{2}
        +\frac{1}{3}\log 3
        -\frac{2}{3}\int_{0}^{1}(1-t)\log \left|\frac{3\pi \delta t}{\sin (3\pi \delta t)}\right|\, dt.
\end{multline*}
 It follows that
   \begin{multline*}
   \sum_{\substack{  n\geq 1\\ (n,3)=1}}
         \frac{1-F_{4}(\delta n)}{n}
=
         -\frac{2}{3}\log \delta 
         -\frac{2}{3}\log (2\pi)+1+\frac{1}{3}\log 3 
         \\+2\int_{0}^{1}\left(1-t\right)\left(\log \left|\frac{\pi \delta t}{\sin (\pi \delta t)}\right|-\frac{1}{3}\log \left|\frac{3\pi \delta t}{\sin (3\pi \delta t)}\right|\right)\, dt.
 \end{multline*}
Now, we use Proposition~\ref{pro1} and Lemma~\ref{lem42} to obtain that
\begin{equation}
\label{equ81}
\left|L(1, \chi)\right|\leq 
-\frac{2}{3}\log \delta 
        -\frac{2}{3}\log (2\pi)+1+\frac{1}{3}\log 3
        +\frac{\pi^3 \delta ^2}{18}+\frac{2\pi^2\delta^2}{27}
        +\frac{\pi}{\sqrt{q}}
        \sum_{\substack{ 1\leq m\leq \delta q\\ (m,3)=1}}
\left(\frac{m}{\delta q}-1\right)^2.
\end{equation}
 For the last sum above, we use Lemma~\ref{lem41} to find that 
 \begin{eqnarray*}
         \sum_{\substack{1\leq m\leq \delta q \\(m,3)=1}}\left(\frac{m}{\delta q}-1\right)^2
&=&
        \sum_{\substack{ m=3k+1\\  1\leq m\leq \delta q}}\left(\frac{m}{\delta q}-1\right)^2
 +
        \sum_{ \substack{m=3k+2\\ 2\leq m\leq \delta q}}\left(\frac{m}{\delta q}-1\right)^2 
\\&\leq &                                 
                 \frac{2\delta q}{9} 
                 -\frac{14}{\delta^2q^2}
                 +\frac{14}{3\delta q}.
 \end{eqnarray*}
Then, Eq~\eqref{equ81} becomes 
\begin{equation*}
 \left|L(1, \chi)\right|\leq 
 -\frac{2}{3}\log \delta 
         -\frac{2}{3}\log (2\pi)+1+\frac{1}{3}\log 3
         +\frac{\pi^3 \delta ^2}{18}
         +\frac{2\pi ^2\delta ^2}{27}
         +\frac{2\pi \delta \sqrt{q}}{9}
         +\frac{\pi}{\delta q\sqrt{q}}\left(\frac{14}{3}-\frac{14}{\delta q}\right).
\end{equation*}
The choice $ \delta =3/ \left(\pi \sqrt{q} \right)$ yields
  \begin{equation}
  \label{equ85}
  \left|L(1, \chi)\right|\leq 
           \frac{1}{3}\log q
          -\frac{1}{3}\log (12)
          +\frac{5}{3}
          +\frac{1}{q}\left(\frac{\pi}{2}+\frac{2}{3}+\frac{14\pi^2}{9}-\frac{14\pi^3}{9\sqrt{q}}\right),
\end{equation}
with $\frac{5}{3}-\frac{1}{3}\log (12)=0.8383\cdots$. Setting 
\begin{equation}
C_{\textrm{odd}}(q)=\frac{5}{3}-\frac{1}{3}\log (12)+\frac{1}{q}\left(\frac{\pi}{2}+\frac{2}{3}+\frac{14\pi^2}{9}-\frac{14\pi^3}{9\sqrt{q}}\right),
\end{equation}
it follows that 
\begin{equation} 
\left|L(1, \chi)\right|\leq 
           \tfrac{1}{3}\log q+C_{\textrm{odd}}(q).
\end{equation}
We list below some values of $C_{\textrm{even}}(q)$ and $C_{\textrm{odd}}(q)$. 
\begin{table}[h]
\centering
\begin{tabular}{|c||c|c|c|c|c|}
 \hline
 $q$ &  $10^4$ &  $10^5$ & $10^6 $ & $2 \cdot 10^6$ & \textrm{$\infty$} \\
 \hline
 $C_{\textrm{even}} \leq $ & $0.395781$ &  $0.375558$ & $0.369162 $ & $ 0.368296 $ & $0.366205$   \\
 \hline
  $C_{\textrm{odd}} \leq $  & $0.840076$ &  $0.838539$ & $0.838382 $ & $0.838374 $ & $0.838365$ \\
  \hline
 \end{tabular}
 \end{table}
 \end{itemize}
Thus, for $q>2\cdot 10^6$, we have proved that 
  \begin{equation*}
 |L(1,\chi)|\leq \tfrac 13\log q + \left\{ \begin{array}{ll}
         0.368296
         & \textrm{when $\chi(-1)=1$,}\vspace{0.5cm}\\
         0.838374
         & \textrm{ when $\chi(-1)=-1.$}
 \end{array} \right.
 \end{equation*}
In the next section, we check that our result is valid for $2\leq q\leq 2\cdot 10^6$ using a rigorous and efficient algorithm for computing $L(1, \chi)$ for all primitive $\chi$. This completes the proof. 
\section{Numerical verification}
\label{sec4}
\subsection{The Algorithm}

In \cite{Platt2013}, the second author describes two efficient and rigorous algorithms for computing values of Dirichlet L-functions. We adapt one of those for our current purpose and for convenience we restate the key Lemma here.

\begin{lem}\label{lem:dc_dft}
For $q\in\Z\geq 3$ and given $\varphi(q)$ complex values $a(n)$ for $n\in[1,q-1]$ and $(n,q)\neq 0$, we can compute
\begin{equation}
\nonumber
\sum\limits_{n=1}^{q-1}a(n)\chi(n)
\end{equation}
for the $\varphi(q)$ characters $\chi$ in $\bigoh(\varphi(q)\log(q))$ time and $\bigoh(\varphi(q))$ space.
\begin{proof}
We construct the sum via a series of Discrete Fourier Transforms and we refer the reader to \cite{Platt2013} for the details. The existence of Fast Fourier Transform algorithms for arbitrary length inputs then gives us the claimed time complexity.
\end{proof}
\end{lem}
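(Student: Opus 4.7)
The plan is to exploit the group structure of the units $(\Z/q\Z)^*$. By the Chinese Remainder Theorem, writing $q=\prod_i p_i^{e_i}$ gives an isomorphism $(\Z/q\Z)^*\cong\prod_i (\Z/p_i^{e_i}\Z)^*$, and each factor is cyclic except possibly the $2$-part when $e\geq 3$, which splits as $\Z/2\Z\times\Z/2^{e-2}\Z$. Picking generators of each cyclic piece produces an isomorphism $(\Z/q\Z)^*\cong\prod_{j=1}^r \Z/m_j\Z$ with $\prod_j m_j=\varphi(q)$, together with a dual isomorphism of the character group. Every unit $n$ is then labelled by a tuple of discrete logarithms $(k_1(n),\ldots,k_r(n))$, and every character $\chi$ by a tuple $(\ell_1,\ldots,\ell_r)$, in such a way that $\chi(n)=\prod_j e(\ell_j k_j(n)/m_j)$.

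With the input re-indexed as $A(k_1,\ldots,k_r):=a(n)$, the family of sums to compute becomes the multidimensional Discrete Fourier Transform
\begin{equation*}
S(\ell_1,\ldots,\ell_r)=\sum_{k_1=0}^{m_1-1}\cdots\sum_{k_r=0}^{m_r-1} A(k_1,\ldots,k_r)\, e\!\left(\sum_{j=1}^r \ell_j k_j/m_j\right),
\end{equation*}
of total size $\varphi(q)$. Applying one-dimensional DFTs along each axis in succession evaluates all outputs simultaneously in work $\bigoh\bigl(\varphi(q)\sum_j\log m_j\bigr)=\bigoh(\varphi(q)\log q)$ and can be arranged in place using $\bigoh(\varphi(q))$ complex words of storage.

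The main obstacle is that the axis lengths $m_j$ can be arbitrary integers, so a plain radix-$2$ Cooley--Tukey FFT does not apply directly. The remedy is Bluestein's chirp $z$-transform, which expresses a DFT of any length $N$ as a cyclic convolution that can be padded to the next power of two and executed in $\bigoh(N\log N)$ operations. Substituting this length-$m_j$ transform into each axis of the multidimensional DFT delivers the announced total time bound. A one-off precomputation of the discrete-logarithm tables and the minor bookkeeping required for the non-cyclic $2$-part and for the zero-filled non-unit indices are absorbed into the stated complexity, leaving the rest of the argument as mechanical verification.
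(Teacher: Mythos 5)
Your argument is correct and is essentially the same one the paper relies on: the paper's proof simply defers to \cite{Platt2013}, where the sums are organised, via the CRT/discrete-logarithm decomposition of $(\Z/q\Z)^*$, into a multidimensional DFT evaluated with arbitrary-length (Bluestein-type) FFTs. You have merely written out the details that the paper leaves to the reference.
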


To exploit Lemma~\ref{lem:dc_dft} we use the following;
\begin{lem}
Let $\chi$ be any non-principal Dirichlet character of conductor $q\in\N$, $L(z,\chi)$ be its associated L-function and $\psi(\alpha)$ be the digamma function. Then
\begin{equation*}
L(1,\chi)=\frac{-1}{q}\sum\limits_{a=1}^q\chi(a)\psi\left(\frac{a}{q}\right).
\end{equation*}
\begin{proof}
For $z\neq 1$ we have the identity (see for example \S $12$ of \cite{Apostol1976})
\begin{equation}\label{eq:hur}
L(z,\chi)=q^{-z}\sum\limits_{a=1}^q \chi(a)\zeta\left(z,\frac{a}{q}\right).
\end{equation}

In addition, for any non-principal character $\chi$ we have
\begin{equation*}
\sum\limits_{a\;\textrm{mod}\;q} \chi(a)=0.
\end{equation*}
Thus for $\chi$ non-principal and for any complex constant $C$, we can replace Equation~\ref{eq:hur} with
\begin{equation*}
L(z,\chi)=q^{-z}\sum\limits_{a=1}^q \chi(a)\left(\zeta\left(z,\frac{a}{q}\right)-C\right).
\end{equation*}
In particular, again for $\chi$ non-principal, we have
\begin{equation*}
\begin{aligned}
L(1,\chi)&=\lim\limits_{z\rightarrow 1\downarrow} \sum\limits_{a=1}^q q^{-z}\chi(a)\left(\zeta\left(z,\frac{a}{q}\right)-\zeta(z)\right)\\
&=\sum\limits_{a=1}^q \frac{1}{q}\chi(a) \sum\limits_{n=1}^\infty\frac{1-\frac{a}{q}}{n(n+\frac{a}{q}-1)}.
\end{aligned}
\end{equation*} 
Finally we have the series representation of the digamma function (see $6.3.16$ of \cite{abramowitz1964}) valid for $z\neq -1,-2,\ldots$
\begin{equation*}
\psi(1+z)=-\gamma+\sum\limits_{n=1}^\infty \frac{z}{n(n+z)}
\end{equation*}
and the result follows.
\end{proof}
\end{lem}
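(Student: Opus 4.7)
The plan is to start from the Dirichlet series $L(1,\chi)=\sum_{n\geq 1}\chi(n)/n$ and reorganize it by residues modulo $q$. Since $\chi$ has period $q$, this formally gives
\[
L(1,\chi) = \sum_{a=1}^q \chi(a)\sum_{k=0}^\infty \frac{1}{a+kq} = \frac{1}{q}\sum_{a=1}^q \chi(a)\sum_{k=0}^\infty \frac{1}{k+a/q}.
\]
Each inner sum is divergent, so the rearrangement is only formal at this stage. The key leverage is that $\chi$ is non-principal, so $\sum_{a=1}^q\chi(a)=0$; this means I may subtract any divergent constant from each inner sum without altering the value.

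Concretely, I would subtract $\sum_{k=0}^\infty 1/(k+1)$ from each inner sum to obtain the absolutely convergent expression
\[
L(1,\chi) = \frac{1}{q}\sum_{a=1}^q \chi(a)\sum_{k=0}^\infty\left(\frac{1}{k+a/q}-\frac{1}{k+1}\right).
\]
To identify this with $\psi$, I invoke the classical series representation
\[
\psi(x) = -\gamma + \sum_{k=0}^\infty\left(\frac{1}{k+1}-\frac{1}{k+x}\right),
\]
so each inner sum equals $-\psi(a/q)-\gamma$. Substituting back and using $\sum_a\chi(a)=0$ once more to kill the $\gamma$ contribution yields exactly the stated formula.

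The only delicate point is justifying the term-by-term rearrangement, since $\sum\chi(n)/n$ is only conditionally convergent. I would handle this by working with partial sums $\sum_{n=1}^{Nq}\chi(n)/n$, applying the finite identity $\sum_{a=1}^q\chi(a)\sum_{k=0}^{N-1}1/(k+1)=0$ before taking $N\to\infty$. A cleaner alternative, which I would fall back on if that bookkeeping is fiddly, is to start from the Hurwitz identity $L(z,\chi)=q^{-z}\sum_{a=1}^q\chi(a)\zeta(z,a/q)$, subtract $\zeta(z)$ from each Hurwitz piece (legitimate, since the added total vanishes by non-principality), and pass to the limit $z\to 1$ using the Laurent expansion $\zeta(z,x)=1/(z-1)-\psi(x)+O(z-1)$. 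Both routes arrive at the same identity, and the main obstacle is really just the convergence bookkeeping rather than any substantive analytic difficulty.
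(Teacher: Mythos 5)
Your proposal is correct, and your primary route is genuinely different from the paper's. The paper never touches the conditionally convergent Dirichlet series at $s=1$: it starts from the Hurwitz zeta identity $L(z,\chi)=q^{-z}\sum_{a=1}^q\chi(a)\zeta(z,a/q)$ for $z\neq 1$, uses $\sum_a\chi(a)=0$ to subtract $\zeta(z)$ from each Hurwitz term, and lets $z\to 1^+$, at which point $\zeta(z,a/q)-\zeta(z)$ tends to $\sum_{n\geq 1}\frac{1-a/q}{n(n+a/q-1)}=-\gamma-\psi(a/q)$; this is exactly your ``cleaner alternative'' fallback. Your main argument instead regularizes the divergent inner sums $\sum_{k\geq 0}(a+kq)^{-1}$ by hand, subtracting the harmonic series and invoking the classical expansion $\psi(x)=-\gamma+\sum_{k\geq 0}\bigl(\frac{1}{k+1}-\frac{1}{k+x}\bigr)$, with the $\gamma$ killed by a second application of $\sum_a\chi(a)=0$. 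That route is more elementary (no Hurwitz zeta, no analytic continuation), but it puts the burden of rigor on the rearrangement of a conditionally convergent series; your fix --- truncating at $n\leq Nq$ so the subtraction of $\sum_{k=0}^{N-1}\frac{1}{k+1}$ is a finite, exactly vanishing correction before letting $N\to\infty$ --- is the right one and closes that gap. The paper's route buys cleaner convergence bookkeeping at the price of citing the Hurwitz identity and the behaviour of $\zeta(z,x)-\zeta(z)$ near $z=1$; yours buys self-containedness at the price of that bookkeeping. Both are sound.
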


\subsection{The Computation}

We implemented the above algorithm using the C++ programming language. To avoid potential problems with the propagation of rounding and truncation errors, we used the second author's double precision interval arithmetic package throughout. This in turn exploits the work of Lambov \cite{Lambov2008} and the CRLIBM package \cite{Muller2010}.

We ran the code on a single node of the University of Bristol's Bluecrystal cluster~\cite{ACRC2009} using all $8$ cores. The elapsed time was $69$ hours and there were no exceptions to Theorem~\ref{Thm4} over the $115,492,010,081$ primitive L-functions checked. 

At Figure~\ref{fig:even_max} we plot the maximum value of $|L(1,\chi)|-\frac{1}{3}\log q$ achieved over all even characters for each conductor $q\equiv 0\mod 3$ with $q\in[3,100000]$. Figure~\ref{fig:odd_max} shows the same information for odd characters. The reference lines indicate the bounds of Theorem~\ref{Thm4}.

The ``banding'' observed appears to be driven by the divisibility properties of the conductor. For example, the lower of the two main bands seen in each figure comprises those $q$ divisible by $12$. The largest value seen for even characters was at $q=249$ where $|L(1,\chi)|-\frac{1}{3}\log(q)<0.271789$ and for odd characters the maximum observed was at $q=111$ where $|L(1,\chi)|-\frac{1}{3}\log(q)<0.815651$. We conjecture that these upper bounds will hold for all $q$, not just those below $2\cdot 10^6$. 

\begin{figure}[h]
\centering
\fbox{\includegraphics[width=0.89\linewidth]{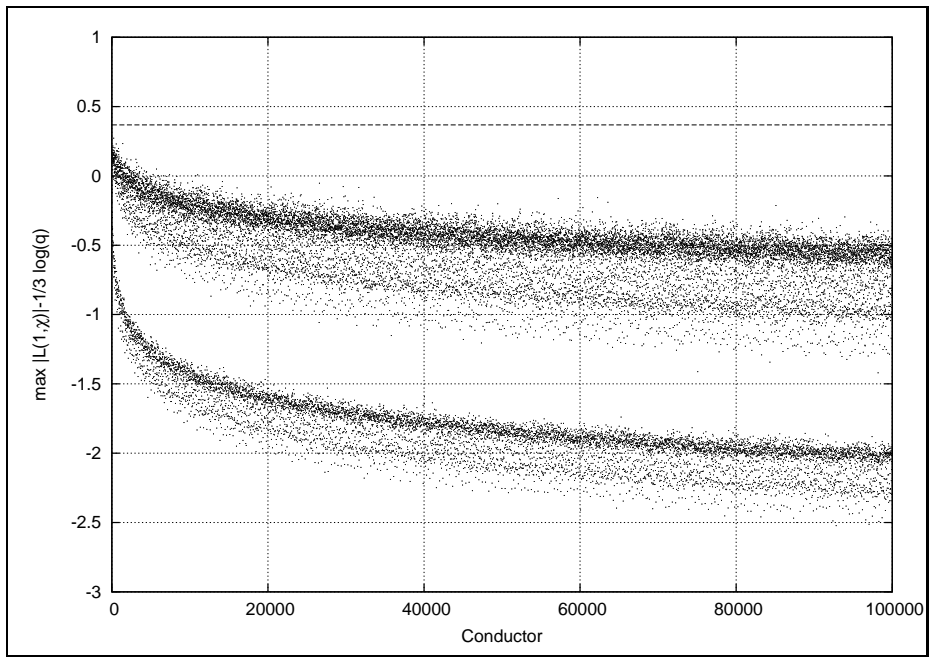}}
\caption{See text.}
\label{fig:even_max}
\end{figure}

\begin{figure}[h]
\centering
\fbox{\includegraphics[width=0.89\linewidth]{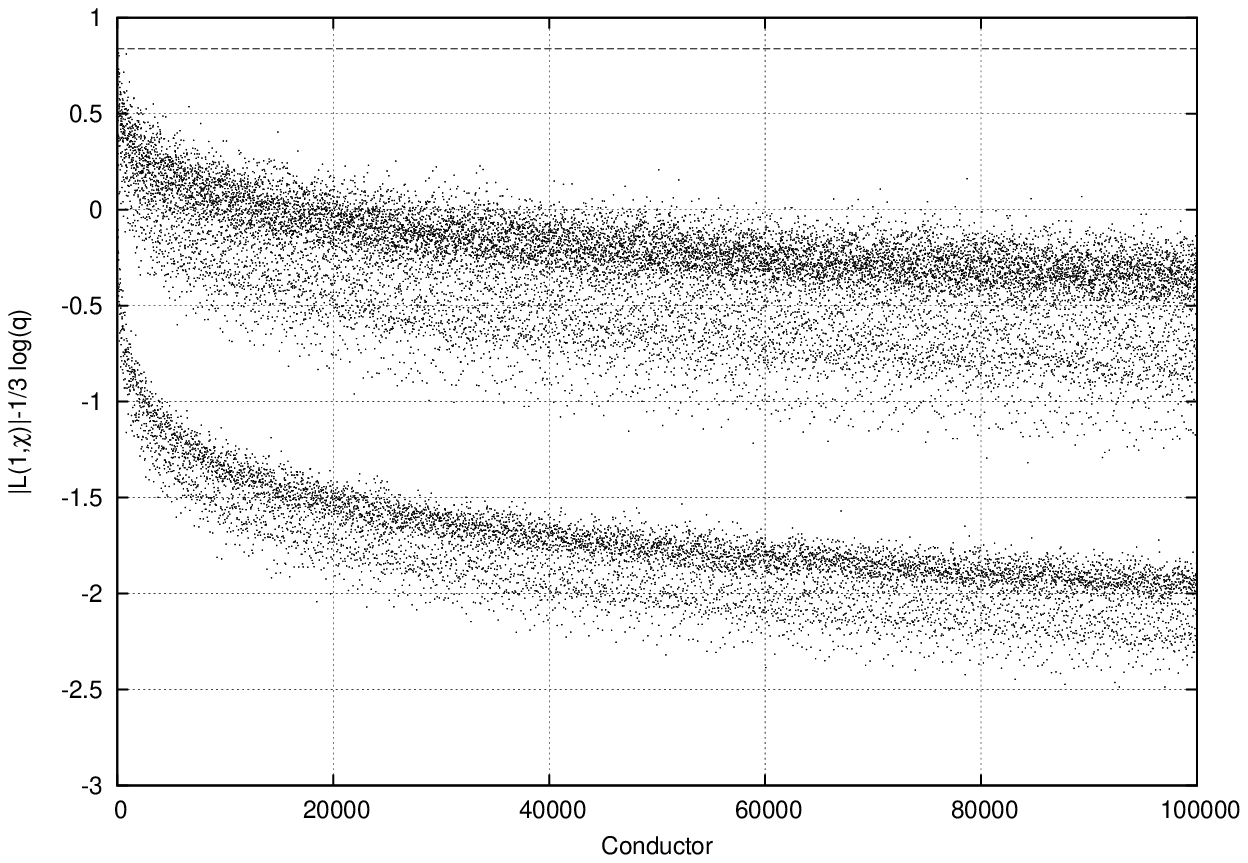}}
\caption{See text.}
\label{fig:odd_max}
\end{figure}


\bibliographystyle{amsplain}
\bibliography{davebib5}{}




         
     
     


\end{document}